\tikzstyle arrowstyle=[scale=1]
\tikzstyle directed=[postaction={decorate,decoration={markings,
		mark=at position .65 with {\arrow[arrowstyle]{stealth}}}}]
\tikzstyle reverse directed=[postaction={decorate,decoration={markings,
		mark=at position .65 with {\arrowreversed[arrowstyle]{stealth};}}}]
\theoremstyle{theorem}
\newtheorem{theorem}{Theorem}
\newtheorem{corollary}[theorem]{Corollary}
\theoremstyle{definition}
\begin{document}

%\title{A Tournament For Which We Can Count Hamilton Cycles} %option 1
\title{What Moser \emph{Could} Have Asked: Counting Hamilton Cycles in Tournaments} %option 2
\markright{Abbreviated Article Title}
\author{Neil J. Calkin,  Beth Novick  and  Hayato Ushijima-Mwesigwa}
%\subjclass[2010]{05C20 , 05C30,05A16 }
\begingroup%Locallizing the change to `thefootnote'.
\renewcommand{\thefootnote}{}%Removing the footnote symbol.
\footnotetext{% 
	%   1991 Mathematics Subject Classification
	%   http://www.ams.org/msc/
	\emph{AMS Subject Classification: 05C20, 05C30, 05A16}		}% 
\endgroup
\maketitle

\begin{abstract}
Moser asked for a construction of explicit tournaments on $n$ vertices having at least $(\frac{n}{3e})^n$ Hamilton cycles. We show that he could have asked for rather more.
\end{abstract}

\section{Introduction}
\epigraph{\dots the cycle has taken us up through forests.}{\textit{Robert M. Pirsig}}

In his classic book on tournaments, Moon \cite[Section 10]{Moon} discusses the question of exhibiting tournaments with a large number of Hamilton cycles. He poses the question (Exercise 4, attributed to Moser), of constructing a tournament on $n$ vertices having at least $(\frac{n}{3e})^n$ Hamilton cycles. Presumably, the intended construction is to take three tournaments, $T_1, T_2, T_3$, on $\frac{n}{3}$ vertices, and construct a new tournament $C_3(T_1, T_2, T_3)$ by orienting  all edges from $T_1$ to $T_2$, $T_2$ to $T_3$, and $T_3$ to $T_1$ (See Figure \ref{fig:T1} ).
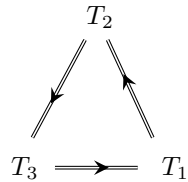
\begin{figure}[ht]
	\begin{center}
		\begin{tikzpicture} [scale = 1]
		\draw [directed, double] (0.8,1.7) ->(0.1,0.4) ;
		\draw  [directed, double] (0.4,0)--(1.5,0);
		\draw  [directed, double] (1.7,0.4)--(1.1,1.7);
		\node at (1,2){$T_{2}$};	 		
		\node at (0,0) {$T_{3}$};
		\node at (2,0) {$T_{1}$};
		\end{tikzpicture}
		\caption {$C_3(T_1, T_2, T_3)$}
		\label{fig:T1}
	\end{center}
\end{figure}
The number of Hamilton cycles in $C_3(T_1, T_2, T_3)$ which do not use any edges internal to $T_1, T_2,$ or $ T_3$ is
\begin{equation*}
 \frac{(\frac{n}{3})!^3}{\frac{n}{3}} \sim %(2\pi)^{\frac{3}{2}}3^{-\frac{1}{2}} \cdot n^{\frac{1}{2}}
 \sqrt{\frac{8\pi^3 n}{3}}
  \left(\frac{n}{3e}\right)^n > \left(\frac{n}{3e}\right)^n.
\end{equation*}
 In this note, we show that this construction has many more Hamilton cycles. Indeed, if $T_1$, $T_2$, and $T_3$ are all transitive, we show that the number of Hamilton cycles is asymptotic to $\frac{1}{(1- \log 2)}\frac{(n-1)!}{(3 \log 2)^n}$.
\section{Background and Definitions}

A \emph{tournament} is an oriented, complete graph. A \emph{Hamilton cycle} or \emph{path} in a tournament $T$, is a spanning directed cycle or directed path in $T$. A tournament with no directed cycles is called \emph{transitive}

 Counting Hamilton paths and cycles in tournaments is a very old problem, dating back to the 1940's: in one of the first applications of the probabilistic method, Szele \cite{Szele} showed that the expected number of Hamilton paths in a random tournament is $\frac{n!}{2^{n-1}}$, therefore showing that there exists a tournament on $n$ vertices with at least this many Hamilton paths. The same argument shows that there exists a tournament with at least $\frac{(n-1)!}{2^{n}}$ Hamilton cycles. Moon observed that it seems difficult to give explicit tournaments with at least this many Hamilton cycles.
 
   Deep results of Cuckler \cite{cuckler} show that every regular tournament on $n$ vertices has at least $\frac{n!}{(2+o(1))^n}$ Hamilton cycles.
 
 Given tournaments $T_1, T_2, T_3$, we can construct a tournament $C_3(T_1, T_2, T_3)$ by orienting all edges from $T_1$ to $T_2$, $T_2$ to $T_3$, and $T_3$ to $T_1$. We will call such tournaments \emph{triangular}. Wormald \cite{wormald} showed that if $T_1, T_2, T_3$ are random tournaments, then the expected number of Hamilton cycles is $2\frac{(n-1)!}{2^{n}}$.
 
  We show that \emph{all} triangular tournaments have a relatively large number of Hamilton cycles, even in the extreme case when they constructed from transitive tournaments.
 
 Let $S(m,k)$ denote the Stirling number of the second kind, that is, $S(m,k)$ is the number of set partitions of $\{1,2, \dots m\}$ into exactly $k$ parts. 

\section{Main Result}
\begin{theorem} \label{thm:main}
	Let $T_1, T_2$, and $T_3$ be any tournaments on $m_1, m_2$, and $m_3$ vertices respectively. Then the number $H$ of Hamilton cycles in $C_3(T_1,T_2,T_3)$ is at least 
	\begin{equation}
H \geq	\sum_{k=1}^{\min \{ m_1, m_2, m_3\}}S(m_1,k)S(m_2,k)S(m_3,k) \frac{k!^3}{k},
	\label{thm1}
	\end{equation}
 with equality when $T_1, T_2$, and $T_3$ are transitive.
\end{theorem}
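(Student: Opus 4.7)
My plan is to classify Hamilton cycles in $C_3(T_1,T_2,T_3)$ by how many times they visit each block. Because every edge between distinct $T_i$'s is oriented according to the cyclic pattern $T_1\to T_2\to T_3\to T_1$, any Hamilton cycle must cross between blocks in this cyclic order. Consequently, for some $k\ge 1$ the cycle enters each $T_i$ exactly $k$ times, and each such visit traces a directed path inside $T_i$ (a single vertex counting as a trivial path). The $3k$ visits form the cycle in the rigid interleaved pattern $B_1^{(1)},B_1^{(2)},B_1^{(3)},B_2^{(1)},\dots,B_k^{(3)}$, and the $k$ visits to $T_i$ partition $V(T_i)$ into $k$ vertex-disjoint directed paths in $T_i$.

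The key bookkeeping step is to set up a clean bijection. Let $p_k(T)$ denote the number of ordered $k$-tuples $(P_1,\dots,P_k)$ of vertex-disjoint directed paths in $T$ whose vertex sets partition $V(T)$. I would prove a bijection between (i) pairs (Hamilton cycle in $C_3$, marked $T_3\to T_1$ edge of that cycle) and (ii) tuples $\bigl(k,\mathcal{P}_1,\mathcal{P}_2,\mathcal{P}_3\bigr)$ with each $\mathcal{P}_i$ counted by $p_k(T_i)$: given the tuple, read off the cycle as the concatenation above, with the marked edge being the closing edge from the last vertex of $P_k^{(3)}$ back to the first vertex of $P_1^{(1)}$; given the marked cycle, the decomposition is forced. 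Since a cycle with $k$ visits per block has exactly $k$ edges of type $T_3\to T_1$, each cycle is counted $k$ times, giving
\begin{equation*}
H \;=\; \sum_{k\ge 1}\frac{p_k(T_1)\,p_k(T_2)\,p_k(T_3)}{k}.
\end{equation*}

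To finish, I would evaluate $p_k(T_i)$ by grouping ordered path tuples by the underlying ordered set partition $(A_1,\dots,A_k)$ of $V(T_i)$: since any directed path in $T_i$ with vertex set $A_j$ is a Hamilton path of the induced sub-tournament $T_i[A_j]$, we get
\begin{equation*}
p_k(T_i)\;=\;\sum_{(A_1,\dots,A_k)}\prod_{j=1}^{k}h\bigl(T_i[A_j]\bigr),
\end{equation*}
where $h(T')$ is the number of Hamilton paths of $T'$. Redei's classical theorem gives $h(T')\ge 1$ for every tournament, so $p_k(T_i)\ge k!\,S(m_i,k)$, and substituting yields the claimed inequality \eqref{thm1}. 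When each $T_i$ is transitive, every sub-tournament is transitive and has a unique Hamilton path, so $h\equiv 1$ and $p_k(T_i)=k!\,S(m_i,k)$ exactly, yielding equality.

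The only step I expect to need care on is the interleaving/marking argument that accounts for the $1/k$ factor; the rest is essentially a direct count. The observation that marking a $T_3\to T_1$ crossing edge picks out one of the $k$ symmetric ``rotational'' representatives of the cycle is what cleanly removes the cyclic over-counting without double-counting anywhere.
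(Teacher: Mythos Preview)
Your proof is correct and follows essentially the same approach as the paper: both decompose a Hamilton cycle by the number $k$ of rounds through the three blocks, reduce to counting $k$-path covers in each $T_i$, and invoke R\'edei's theorem for the lower bound $S(m_i,k)$ with equality in the transitive case. The only cosmetic difference is that you work with \emph{ordered} path tuples $p_k(T_i)$ and use the marked-edge bijection to account for the factor $1/k$, whereas the paper counts \emph{unordered} $k$-path covers $P(T_i,k)$ and states directly that they can be glued into a cycle in $k!^3/k$ ways; since $p_k(T_i)=k!\,P(T_i,k)$, the two formulas coincide.
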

\begin{corollary}\label{cor:main}
	If $T_1, T_2$, and $T_3$ are transitive tournaments on $\frac{n}{3}$ vertices, the number of Hamilton cycles in $C_3(T_1,T_2,T_3)$ is asymptotic to
	\begin{equation}
	 \frac{ 1}{(1 - \log 2) }\frac{(n-1)!}{(3\log 2)^{n}} \simeq 3.25889 \frac{(n-1)!}{(2.07944)^n}.
	\end{equation} 
\end{corollary}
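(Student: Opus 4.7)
The approach has three phases. First, apply Theorem~\ref{thm:main} with $m_1=m_2=m_3=m=n/3$; by the equality clause,
$$H\;=\;\sum_{k=1}^{m}\frac{f(m,k)^{3}}{k},\qquad f(m,k):=k!\,S(m,k),$$
where $f(m,k)$ counts surjections $[m]\twoheadrightarrow[k]$. Second, from the exponential generating function $\sum_{m\ge0}f(m,k)\,t^{m}/m!=(e^{t}-1)^{k}$ and Cauchy's formula, a standard saddle-point analysis gives, for $\lambda=k/m$,
$$f(m,\lambda m)\;\sim\;\frac{m!\,(e^{\rho}-1)^{\lambda m}}{\rho^{\,m+1}\sqrt{2\pi m\,\phi''(\rho)}},\qquad \phi(t):=\lambda\log(e^{t}-1)-\log t,$$
where $\rho=\rho(\lambda)$ is the positive real root of $\lambda\rho e^{\rho}=e^{\rho}-1$.

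Third, apply Laplace's method to the sum over $k$. Since $(d/d\lambda)\phi(\rho(\lambda),\lambda)=\log(e^{\rho(\lambda)}-1)$ by the envelope theorem, the summand $f(m,k)^{3}/k$ is, to leading order in $m$, maximised where $\log(e^{\rho^{*}}-1)=0$, i.e.\ at $\rho^{*}=\log 2$, $\lambda^{*}=1/(2\log 2)$, $k^{*}=m/(2\log 2)$. At this saddle the formulas simplify remarkably: $\phi''(\rho^{*})=(1-\log 2)/(\log 2)^{2}$ yields
$$f(m,k^{*})\;\sim\;\frac{m!}{(\log 2)^{m}\sqrt{2\pi m(1-\log 2)}},$$
and a chain-rule computation giving $\rho'(\lambda^{*})=2(\log 2)^{2}/(\log 2-1)$ produces $(d^{2}/dk^{2})\log[f(m,k)^{3}/k]\big|_{k^{*}}\sim -12(\log 2)^{2}/[m(1-\log 2)]$. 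Laplace's formula then gives
$$H\;\sim\;\frac{f(m,k^{*})^{3}}{k^{*}}\sqrt{\frac{2\pi m(1-\log 2)}{12(\log 2)^{2}}},$$
which, after applying Stirling to $(m!)^{3}$ and repackaging into the $(3m-1)!$ form, collapses to exactly $\tfrac{1}{1-\log 2}\cdot\tfrac{(n-1)!}{(3\log 2)^{n}}$.

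The main obstacle is the bookkeeping---tracking the factors of $\sqrt{2\pi m}$, $\phi''(\rho^{*})$, $\rho'(\lambda^{*})$, and $1/k^{*}$ through the combined saddle-then-Laplace chain so that they combine to give \emph{precisely} the constant $1/(1-\log 2)$, with many square roots and $\pi$'s along the way. Rigour also requires showing the tails $|k-k^{*}|\gg\sqrt m$ contribute negligibly, which is routine from uniform saddle-point bounds on $f(m,k)$ for $\lambda$ bounded away from $\{0,1\}$, together with crude bounds near the endpoints.
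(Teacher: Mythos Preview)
Your proposal is correct and lands on the same asymptotic with the same peak location $k^{*}=m/(2\log 2)$ and the same Gaussian width, but the execution differs from the paper's. The paper does not perform a saddle-point analysis on $(e^{t}-1)^{k}$; instead it writes $f(m,k)=f(m)\,p(m,k)$ with $f(m)=\sum_{k}k!\,S(m,k)$ the ordered Bell number, invokes Bender's central/local limit theorem to assert directly that $p(m,k)$ is asymptotically normal with mean $\mu m=m/(2\log 2)$ and variance $\sigma^{2}m=m(1-\log 2)/(2\log 2)^{2}$, cites Wilf for $f(m)\sim m!/(2(\log 2)^{m+1})$, and then replaces $\sum_{k}p(m,k)^{3}/k$ by the corresponding Gaussian integral. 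In effect you are re-deriving, in this one instance, the content of Bender's theorem via the explicit saddle $\rho(\lambda)$ and its derivatives; what you gain is a self-contained argument (no black boxes), while what the paper gains is brevity---the constants $\mu$ and $\sigma$ drop out of the cited results and the final computation is a one-line Gaussian integral plus two applications of Stirling. Your remark that the tail control is ``routine from uniform saddle-point bounds'' is exactly the place where the paper's citation of Bender does real work, since the local limit theorem already packages the needed uniformity.
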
	

\begin{proof}[Proof of Theorem \ref{thm:main}]
	Take any Hamilton cycle $C$, of $C_3(T_1,T_2,T_3)$, and consider $C$ restricted to  $T_{1}$, $T_{2}$, and $T_{3}$. Since a Hamilton cycle meets every vertex in $T_{1}$, $T_{2}$, and $T_{3}$ exactly once,  $C$ visits each subtournament the same number of times, say $k$. Hence for each $T_i$, $C$ will induce a collection of $k$ disjoint paths that cover the vertices of $T_{i}$. We will refer to such a collection of $k$ paths as a $k$-path cover. Similarly, given $k$-path covers for $T_1, T_2, T_3$, we can construct a Hamilton cycle by joining these $k$-path covers together. The  number of ways of doing this is $k!^3/k$. Thus, if $P(T_i,k)$ denotes the number of $k$-path covers of $T_{i}$, then the number of Hamilton cycles of $C_3(T_1, T_2, T_3)$ which induce $k$-path covers in $T_{1}$, $T_{2}$, and $T_{3}$ is 
	\begin{equation}
 P(T_1,k) P(T_2,k) P(T_3,k)\frac{k!^3}{k}.
	\end{equation}
	It follows that the number of Hamilton cycles in $C_3(T_1,T_2,T_3)$ is 
		\begin{equation*}
			\sum_{k=1}^{\min \{ m_1, m_2, m_3\}}P(T_1,k)P(T_2,k)P(T_3,k) \frac{k!^3}{k}.
		\end{equation*}
	For any set partition of the vertex set of $T_i$ into $k$ nonempty sets, each part will induce a subtournament of $T_i$. R\'edei \cite{Redei} showed that every tournament has a Hamilton path, thus each partition into $k$ sets will induce at least one $k$-path cover of $T_i$. Therefore the number of Hamilton cycles in $C_3(T_1,T_2,T_3)$ is at least 
	\begin{equation*}
		\sum_{k=1}^{\min \{ m_1, m_2, m_3\}}S(m_1,k)S(m_2,k)S(m_3,k) \frac{k!^3}{k}	
	\end{equation*}
	as claimed.
	
	In the case that each $T_i$ is transitive, each subtournament will have exactly one Hamilton path, hence we have equality in (\ref{thm1}).
		\end{proof}
			\begin{proof}[Proof of Corollary \ref{cor:main}]
		Suppose now that each $T_i$ is a transitive tournament on $m$ vertices, then the number of Hamilton cycles in $C_3(T_1,T_2,T_3)$ is equal to 
			\begin{equation}
				\sum_{k=1}^{m}S(m,k)^3 \frac{k!^3}{k}.
		\label{transitive_sum}
			\end{equation}

	As with many combinatorial sums, the summands in (\ref{transitive_sum}) are approximated rather well by a normal distribution. Indeed, if we let	
		\begin{equation*}
			\mu = \frac{1}{2\log 2} \text{  \quad and \quad }  \sigma = \frac{\sqrt{1- \log 2}}{2 \log 2},
		\end{equation*}
 define $f(m) = \sum_{k=1}^{m}S(m,k)k!$, and write  $p(m,k) = \frac{S(m,k)k!}{f(m)}$, then Bender \cite{Bender} shows that $p(m,k)$ is asymptotically normal with mean $\mu m $ and variance $ \sigma^2 m$. Hence, $p(m,k)^3$ is also proportional to a normal distribution, at least in a range of $k$ close to $\mu m$. This allows us to approximate the sum $\sum_{k=1}^{m}S(m,k)^3 \frac{k!^3}{k}$ by an integral, showing that 
 \begin{equation*}
 	\sum_{k=1}^{m}S(m,k)^3  \frac{k!^3}{k} \sim  f(m)^3 \frac{3^{\frac{1}{2}} 2^{\frac{1}{2}}  \pi^{\frac{1}{2}}}{3  \mu  \sigma^2 (2\pi)^{\frac{3}{2}} m^2}.
 \end{equation*}
From Wilf \cite[p. 176]{Wilf}, we know that
	\begin{equation*}
		f(m) \sim \frac{m!}{2(\log 2)^{m+1}},
	\end{equation*}
	Therefore with $n = 3m$, two applications of Stirling's approximation for $n!$ yields
		 \begin{eqnarray*}
		 	\sum_{k=1}^{m}S(m,k)^3  \frac{k!^3}{k} &\sim &  \bigg (\frac{m!}{2(\log 2)^{m+1}} \bigg )^3 \frac{3^{\frac{1}{2}} 2^{\frac{1}{2}}  \pi^{\frac{1}{2}}}{3  \mu  \sigma^2 (2\pi)^{\frac{3}{2}} m^2}\\	
 &\sim & \frac{ \sqrt{2 \pi n}}{n(1 - \log 2) }	\bigg(\frac{n}{3e \log 2}\bigg)^n\\	 	
 &\sim &		 	\frac{ 1}{(1 - \log 2) }\frac{(n-1)!}{(3\log 2)^{n}}\\
		 \end{eqnarray*}
		
%%%%		

\end{proof}
%\begin{acknowledgment}{Acknowledgment.}
\noindent {\textbf{Acknowledgment.}}
	The authors are very grateful to Rod Canfield for helpful advice regarding the asymptotics.
%\end{acknowledgment}

\vfill\eject

\end{document}